\documentclass[usletter,10pt]{amsart}

\usepackage{amssymb,amscd,amsthm, verbatim,amsmath,color,fancyhdr, mathrsfs}
\usepackage{graphicx}
\usepackage{turnstile}
\usepackage{url}
\usepackage{hyperref}
\usepackage{amsaddr}

\usepackage{caption} 
\captionsetup[table]{skip=10pt}

\def\lpf{\mathrm{lpf}}
\def\spf{\mathrm{spf}}
\def\p{\mathfrak{p}}

\newtheorem{thm}{Theorem}[section]

\title[Linear Diophantine equations with the sum of divisors]{Computing bounded solutions to linear Diophantine equations with the sum of divisors}

\author{Max A. Alekseyev}
\address{The George Washington University\\Washington, DC, USA}
\email{maxal@gwu.edu}

\date{}

\begin{document}

\begin{abstract}
We propose an efficient computational method for finding all solutions $n\leq U$ to the Diophantine equation $a\sigma(n) = bn + c$, where integer coefficient $a,b,c$ and an upper bound $U$ are given. Our method is implemented in {\sc SageMath} computer algebra system within the framework of recursively enumerated sets and natively benefits from {\sc MapReduce} parallelization. We used it to discover new solutions to many published equations and close gaps in between the known large solutions, including but not limited to hyperperfect and $f$-perfect numbers, as well as to significantly lift the existence bounds in open questions about quasiperfect and almost-perfect numbers.
\end{abstract}

\maketitle

\section{Introduction}

The \emph{sum of divisors} function, commonly denoted by $\sigma$, has fascinated people for centuries. In particular, it provides elegant characterizations for several important classes of integers, such as the \emph{prime numbers}, which are precisely the solutions to $\sigma(n)=n+1$, and the \emph{perfect numbers}, defined by the equation $\sigma(n)=2n$, among others discussed later in the present paper. While the solutions to the former equation are completely understood, the latter remains solved only partially as the existence of an odd perfect number is one of the oldest open questions in number theory. This question is representative of the rich collection of unresolved problems concerning equations involving $\sigma$~\cite[Section B2]{Guy2004}.

The aforementioned equations can be seen as partial cases of the Diophantine equation:
\begin{equation}\label{eq:main}
a\sigma(n) = bn + c,
\end{equation}
where $a>0,b,c$ are fixed integer coefficients with $\gcd(a,b,c)=1$ and $n$ is an integer indeterminate.
In the present study, we develop an efficient computational method for finding all solutions to a given equation \eqref{eq:main} below a given upper bound $U$. 

Note that the case $a=1$ and $c=0$ corresponds to \emph{multiperfect numbers}, more specifically \emph{$b$-perfect numbers} or just perfect numbers when $b=2$. This case has been the subject of extensive theoretical study (see, for example, references in~\cite[Section~B1]{Guy2004}) as well as large-scale computational searches~\cite{Flammenkamp2023}. Although we do not exclude the case $c=0$ from consideration, it is rather special as it admits additional optimization techniques that are not available for nonzero values of $c$. Another special case $b=0$ corresponds to inverting the sum of divisors function, a problem we addressed in~\cite{Alekseyev2016}, and thus we delegate this case to the corresponding software.
Accordingly, in the present paper we focus on the general case, without discussion of any special treatments for $b=0$ or $c=0$.

We apply our method to many equations of the form~\eqref{eq:main}, particularly those that are present in the Online Encyclopedia of Integer Sequences (OEIS)~\cite{OEIS}, and advance the knowledge about their "small" solutions by discovering new solutions and putting both newly discovered and already known solutions in order below significantly larger search bounds than previously reported. Similarly, for equations with no known solutions (such as quasiperfect and almost-perfect numbers~\cite[Section B2]{Guy2004}), our method can significantly lift the known lower bounds for potential solutions. 

The paper is organized as follows. We introduce the needed notation in Section~\ref{sec:notation}, describe the proposed method in Section~\ref{sec:outline} and its implementation in Section~\ref{sec:implement}, and then present some practical results in Section~\ref{sec:applications}. We conclude the paper with discussion in Section~\ref{sec:remarks}.

\section{Notation}\label{sec:notation}

We start by introducing the notation, which we use throughout the paper:
\begin{itemize}
\item $\spf(n)$ and $\lpf(n)$ denote the smallest and largest prime factor of an integer $n>1$, respectively; 
\item $\nu_p(n)$ denotes the $p$-adic valuation of $n$, i.e. the largest exponent $k$ such that $p^k\mid n$;
\item $\Omega(n)$ and $\omega(n)$ denote the number of prime factors of $n$ with and without multiplicities, respectively;
\item $\tau(n)$ denotes the number of divisors of an integer $n$;
\item $\p_1=2$, $\p_2=3$, $\p_3=5$, $\dots$ denote the prime numbers in their natural order.
\end{itemize}

\section{Method outline}\label{sec:outline}

At its core, our approach to solving \eqref{eq:main} for $n\leq U$ is based on representing the positive integers not exceeding $U$ as the nodes of a tree $T_U$ rooted at $1$, where each node $n>1$ has the parent $n/p^{\nu_p(n)}$ with $p:=\lpf(n)$ (Fig.~\ref{fig:treeTU}). To search for the solutions, we perform the (restricted) depth-first traversal of $T_U$ with a few important optimization techniques making it efficient, which we describe in the follow-up subsections. We therefore refer to the nodes of $T_U$ as the \emph{search space} and to $U$ as the \emph{search bound}.

\begin{figure}[!tb]
\centering \includegraphics[width=\textwidth]{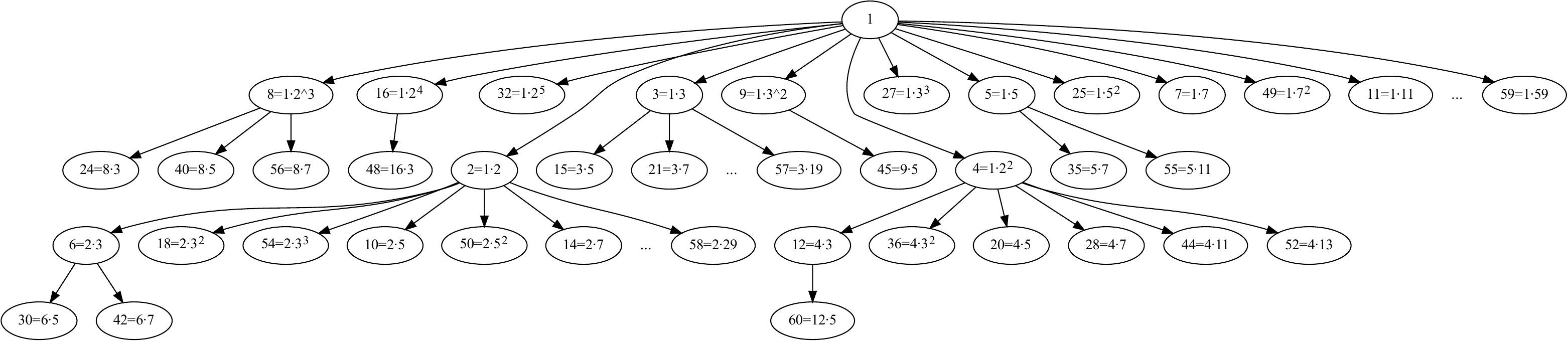}
\caption{The tree $T_U$ for $U=60$, where some nodes $1\cdot p$, $2\cdot p$, and $3\cdot p$ with prime $p$ are hidden under ellipses.}
\label{fig:treeTU}
\end{figure}

Note that the descendants of node $m$ have form $mn'$, where $\spf(n') > \lpf(m)$ (thus $\gcd(m,n')=1$) and $n'\leq U':=U/m$ satisfies the equation $a'\sigma(n') = b'n' + c'$ with the coefficients $(a',b',c')$ obtained from $(a\sigma(m), bm, c)$ by canceling their common factor (see also Section~\ref{sec:reduceabc}). 
That is, at node $m$, we are essentially solving the equation \eqref{eq:main} with $(a,b,c,U)=(a',b',c',U')$ for $n=n'$ with an additional constraint $\spf(n') > \lpf(m)$.

\subsection{Shortcuts}\label{sec:shortcuts}

Under a \emph{shortcut} we understand a way to determine the solutions $n=mn'$ with $\Omega(n') \leq 2$ or $\omega(n')=1$ among the descendants of a node $m$ in $T_U$, without traversing all those descendants.
There are two cases to consider.

\

{\bf Case $n'=p^k$} with $k\geq 1$ and prime $p\in (\lpf(m), U'^{1/k}]$. The equation \eqref{eq:main} here takes the form $a'\frac{p^{k+1}-1}{p-1} = b'p^k + c'$, implying that prime $p$ divides $a' - c'$.
If $a'\ne c'$, we factor $a'-c'$ and try its prime factors as candidate values for $p$, for each of which we then determine suitable values of the exponent $k$.
Otherwise, if $a'=c'$, then $\gcd(a',b')=1$ and $((a'-b')p+b')p^(k-1) = a'$, implying that
\begin{itemize}
    \item for $k=1$ and $a' \ne b'$, there are no solutions; 
    \item for $k=1$ and $a' = b'$ (thus $a'=b'=c'$), any prime $p>\lpf(m)$ gives a solution $mp$ (in our implementation, the case $a'=b'=c'$ does not appear as it is addressed in pre-processing as explained in Section~\ref{sec:reduceabc});
    \item for $k\geq 2$, we have that $p^{k-1}\mid a'$ and furthermore $k-1=\nu_p(a')$, that is, the candidate values for $(p,k)$ are derived from the prime power factors of $a'$.
\end{itemize}
Among the identified solutions we may or may not discard those with $p > U'^{1/k}$, a choice we discuss in Section~\ref{sec:aboveU}.

An explicit partial case of this shortcut for $(a,b,c)=(1,2,d)$ is given by the following easily verifiable claim, which was discovered and stated by multiple people in the corresponding OEIS sequences (see Section~\ref{sec:small_abundance}):
\begin{thm}[OEIS~\cite{OEIS}]\label{th:abundance}
 For integers $d$ and $\ell>0$, the number $n=2^{\ell-1}(2^\ell-d-1)$ is a solution to $\sigma(n)=2n+d$ whenever $2^\ell - d - 1$ is prime.
\end{thm}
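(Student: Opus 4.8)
The plan is to treat this as the natural generalization of the Euclid--Euler shape of an even perfect number (recovered at $d=0$) and verify it by a direct computation using the multiplicativity of $\sigma$. Write $q:=2^\ell-d-1$ and suppose $q$ is prime; the goal is to show that $n=2^{\ell-1}q$ satisfies $\sigma(n)=2n+d$. First I would record that, in the nondegenerate case $q>2$, the factors $2^{\ell-1}$ and $q$ are coprime, so $\sigma$ splits as
\[
\sigma(n)=\sigma(2^{\ell-1})\,\sigma(q)=(2^\ell-1)(q+1).
\]

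Next I would substitute $q+1=2^\ell-d$ to get $\sigma(n)=(2^\ell-1)(2^\ell-d)=2^{2\ell}-(d+1)2^\ell+d$, and compare this with
\[
2n+d=2^\ell q+d=2^\ell(2^\ell-d-1)+d=2^{2\ell}-(d+1)2^\ell+d,
\]
so the two sides coincide and the claim follows. The boundary case $\ell=1$ is the same identity with $n=q$, $\sigma(q)=q+1=2q+d$.

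Honestly, there is no real obstacle here: once $\sigma$ is split multiplicatively the statement is a one-line algebraic identity. The only point that deserves a word of care is the coprimality of $2^{\ell-1}$ and $q$, i.e.\ that $q$ be odd: if $q=2$ then $n=2^\ell$ and $\sigma(n)=2^{\ell+1}-1=2n+d$ forces $\ell=1$ (equivalently $d=-1$), so for $\ell\ge 2$ one must read the hypothesis as "$2^\ell-d-1$ is an odd prime". In the setting of Section~\ref{sec:shortcuts}, where this solution arises as a descendant $n=mn'$ of $m=2^{\ell-1}$ with $n'=q$ prime, the requirement $\spf(n')>\lpf(m)=2$ already enforces $q$ odd, so no extra hypothesis is needed there.
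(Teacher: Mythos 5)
Your proof is correct and amounts to essentially the same computation as the paper's, which obtains the identity as an instance of the $n'=p^k$, $k=1$ shortcut at the node $m=2^{\ell-1}$, where $(a',b',c')=(\sigma(m),2m,d)=(2^\ell-1,2^\ell,d)$ and the candidate prime is $p=a'-c'=2^\ell-d-1$. Your caveat about $q=2$ is well taken --- for $\ell\ge 2$ and $d=2^\ell-3$ the number $2^\ell-d-1=2$ is prime yet $n=2^\ell$ is not a solution, so the hypothesis must be read as ``odd prime''; in the paper's derivation this is enforced automatically by the shortcut's requirement $p>\lpf(m)=2$.
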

Indeed, here we have $m=2^{\ell-1}$, giving $(a',b',c')=(\sigma(m),2m,d)=(2^\ell-1,2^\ell,d)$, and if $p:=a'-c'=2^\ell-1-d$ is prime, then the shortcut produces a solution $mp = 2^{\ell-1}(2^\ell-d-1)$ stated in Theorem~\ref{th:abundance}.

\

{\bf Case $n'=pq$} with distinct primes $p, q$, both greater than $\lpf(m)$, and $pq\leq U'$. Here the equation \eqref{eq:main} takes the form $a'(p+1)(q+1) = b'pq + c$, which we rewrite as $Apq + Bp + Bq + C = 0$ with coefficients $A:=a'-b'$, $B:=a'$, and $C:=a'-c'$ (in practice, we also cancel their common factor to have $\gcd(A,B,C)=1$). 
If $A=0$, we check if $B\mid C$, in which case we obtain the suitable $(p,q)$ by iterating $p$ over the primes in the interval $(\lpf(m),\min(D/2,\sqrt{U'})$ with $D:=-C/B$, and testing $q:=D-p$ for primality.
Otherwise, when $A\ne 0$, we \emph{complete the rectangle} (the technique that was known to Brahmagupta born 598 AD~\cite[Chapter XIII]{Dickson1920}), i.e. rewrite the equation as
$$(Ap+B)(Aq+B)=B^2-AC,$$
which allows us to quickly obtain suitable prime pairs $(p,q)$ by factoring and iterating over the divisors of $B^2-AC$. In the exceptional case $B^2-AC=0$, solutions exist only if $p:=-B/A$ is a prime satisfying $p>\lpf(m)$, and then we report that any prime $q>\lpf(m)$ different from $p$ gives a solution $mpq$.

Again, here we may obtain some solutions greater than $U$, and decide whether or not to report them.

\subsection{Pruning with prime wheel}

At each node $m$ of $T_U$, the shortcuts described in the previous section provide us with the solutions $n=mn'$ satisfying $\Omega(n') \leq 2$ or $\omega(n')=1$, and so it remains to focus on finding those with $\Omega(n')\geq 3$ and $\omega(n')\geq 2$. It immediately follows that $\spf(n')\leq U'^{1/3}$, however, we do not need this bound as our approach relies on more accurate dynamic bounds as described below.

Our goal is to generate a set $Q$ containing all \emph{feasible} prime powers, that is, for any solution $n'$ of interest and $p:=\spf(n')$, we should have $p^{\nu_p(n')}\in Q$. Since $Q$ defines the set of children $\{ mq\ :\ q\in Q\}$ of the node $m$ to visit, we want $Q$ to be as small as possible.
We will need the following theorem, which can be seen as a refinement of Lemma~1 in \cite{Hagis1982}:

\begin{thm}\label{th:bound} Let $n$, $U$, and $S$ be positive integers such that $n\leq U$, $\sigma(n)\geq S$, and $\spf(n)=\p_k$ for some index $k$. Then for a positive integer $\ell$:
\begin{itemize}
    \item if $\ell \leq \omega(n)$, then 
    $$\prod_{i=1}^{\ell} \p_{k+i-1} \leq U;$$
    \item if $\ell \geq \omega(n)$, then 
    $$\prod_{i=1}^{\ell} \frac{\p_{k+i-1}}{\p_{k+i-1}-1} \geq \frac{S}{U}.$$
\end{itemize}
\end{thm}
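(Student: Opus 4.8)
The plan is to exploit the fact that if $\spf(n)=\p_k$, then every prime factor of $n$ is at least $\p_k$, and moreover, since $n$ has $\omega(n)$ distinct prime factors, the $i$-th smallest prime factor of $n$ is at least $\p_{k+i-1}$ (the primes $\p_k,\p_{k+1},\dots$ listed in order give the smallest possible candidates). First I would set up this comparison carefully: let $q_1<q_2<\cdots<q_{\omega(n)}$ be the distinct prime factors of $n$, so $q_i\geq \p_{k+i-1}$ for each $i$, with $q_1=\p_k$.

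For the first bullet, assume $\ell\leq\omega(n)$. Since $n$ is divisible by the product $q_1q_2\cdots q_{\omega(n)}\geq q_1q_2\cdots q_\ell\geq \prod_{i=1}^{\ell}\p_{k+i-1}$, and $n\leq U$, we immediately get $\prod_{i=1}^{\ell}\p_{k+i-1}\leq n\leq U$. This direction is essentially just monotonicity of the partial products, so it is routine.

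For the second bullet, assume $\ell\geq\omega(n)$. The key inequality is $\sigma(n)/n = \prod_{p\mid n}\sigma(p^{\nu_p(n)})/p^{\nu_p(n)} < \prod_{p\mid n}\frac{p}{p-1} = \prod_{i=1}^{\omega(n)}\frac{q_i}{q_i-1}$, using $\sigma(p^a)/p^a = 1+1/p+\cdots+1/p^a < 1/(1-1/p) = p/(p-1)$. Now I would use that $x\mapsto x/(x-1)$ is decreasing for $x>1$, so $q_i\geq\p_{k+i-1}$ gives $\frac{q_i}{q_i-1}\leq\frac{\p_{k+i-1}}{\p_{k+i-1}-1}$, hence $\sigma(n)/n \leq \prod_{i=1}^{\omega(n)}\frac{\p_{k+i-1}}{\p_{k+i-1}-1}$. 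Finally, since $\ell\geq\omega(n)$ and each extra factor $\frac{\p_{k+i-1}}{\p_{k+i-1}-1}$ exceeds $1$, extending the product from $\omega(n)$ up to $\ell$ only increases it: $\sigma(n)/n \leq \prod_{i=1}^{\ell}\frac{\p_{k+i-1}}{\p_{k+i-1}-1}$. Combining with $\sigma(n)\geq S$ and $n\leq U$ yields $\frac{S}{U}\leq\frac{\sigma(n)}{n}\leq\prod_{i=1}^{\ell}\frac{\p_{k+i-1}}{\p_{k+i-1}-1}$, as claimed.

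The main thing to be careful about — rather than a genuine obstacle — is the direction of all the inequalities in the second part: one needs the monotone decrease of $x/(x-1)$ to turn "prime factors are large" into "the ratio $\sigma(n)/n$ is small", and then the fact that padding the product with more than $\omega(n)$ terms keeps it an upper bound for $\sigma(n)/n$ precisely because each factor is $>1$. The edge cases ($\ell=\omega(n)$, where both bullets coincide and the arguments degenerate to equalities in the product length, and $\omega(n)=1$) are handled uniformly by the same chain of inequalities, so no separate treatment is needed.
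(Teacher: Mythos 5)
Your proof is correct and follows essentially the same route as the paper's: bounding the $i$-th smallest prime factor below by $\p_{k+i-1}$ for the first bullet, and combining $\sigma(n)/n \leq \prod_{p \mid n} \frac{p}{p-1}$ with the monotone decrease of $x/(x-1)$ and the fact that each factor exceeds $1$ for the second. The only cosmetic difference is that the paper proves the case $\ell = \omega(n)$ first and then invokes monotonicity of both left-hand sides in $\ell$, whereas you fold that monotonicity directly into each chain of inequalities.
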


\begin{proof} Let's start with the case $\ell=\omega(n)$.
Since $\spf(n)=\p_k$, the $\ell$ distinct prime factors of $n$ in increasing order are bounded from below by $\p_k,\p_{k+1},\dots,\p_{k+\ell-1}$, respectively, and therefore
\begin{equation}\label{eq:boundP0}
  \prod_{i=1}^{\ell} \p_{k+i-1} \leq n \leq U.  
\end{equation}
For the fraction $\frac{\sigma(n)}n$, we have the following upper bound:
$$\frac{\sigma(n)}n = \prod_{\text{prime }p|n} (1+p+\dots+p^{-\nu_p(n)})\leq \prod_{\text{prime }p|n} \frac{p}{p-1}.$$
Since $\frac{p}{p-1}$ is a decreasing function of $p$, the following inequality holds:
\begin{equation}\label{eq:boundP1}
\prod_{i=1}^{\ell} \frac{\p_{k+i-1}}{\p_{k+i-1}-1} \geq \frac{\sigma(n)}{n} \geq \frac{S}{U}.
\end{equation}
The theorem statement now follows from the observation that for a fixed $k$, the left-hand sides of the inequalities \eqref{eq:boundP0} and \eqref{eq:boundP1} represent increasing functions of $\ell$.
\end{proof}

We construct the set $Q$ by keeping track of an accurate lower bound $\ell\leq \omega(n')$ (initially $\ell=2$) and an $\ell$-tuple of consecutive primes $W:=(\p_k,\p_{k+1},\dots,\p_{k+\ell-1})$, starting with $W_1 = \p_k$ (initially $\p_k$ is the next prime after $\lpf(m)$).\footnote{We do not track the actual value of index $k$, and we use indices just to underline the relationship between primes in $W$.}
We refer to $W$ as the \emph{prime wheel} of length $|W|=\ell$. It supports two operations:\footnote{In practice, both operations on the wheel are done by using a single generator of consecutive primes.}
\begin{description}
    \item[rolling] corresponds to incrementing $k$, when the tuple $W$ changes by removing the first element and appending the next prime ($=\p_{k+\ell}$) after the last element of $W$;
    \item[length increment] is done by appending the next prime ($=\p_{k+\ell}$) after the last element of $W$.
\end{description}

Along with the wheel $W$, we keep track of the products 
$$P_\kappa(W):=\prod_{p\in W} (p-\kappa),\qquad \kappa \in \{0,1\}.$$ 
From $|W|\leq \omega(n')$ and $W_1\leq \spf(n')$, it follows that $P_0(W) \leq n' \leq U'$, and thus $a'\frac{\sigma(n')}{n'}=b' + \frac{c'}{n'}$ is bounded from below by
$$
L(W) := \begin{cases}
b' + \frac{c'}{U'} & \text{if } c'\geq 0;\\
b' + \frac{c'}{P_0(W)} & \text{if } c' < 0.
\end{cases}
$$
For each state of the wheel $W$, we test the following conditions:
\begin{itemize}
    \item if $P_0(W) > U'$, then by Theorem~\ref{th:bound} no solutions with $\spf(n')\geq W_1$ exist, and we stop the wheel;
    \item if $a'\frac{P_0(W)}{P_1(W)} < L(W)$, then by Theorem~\ref{th:bound} there are no solutions with $\omega(n')=|W|$, and we increment the wheel length.
\end{itemize}
If neither of the two conditions holds, then we consider $p:=W_1$ as a candidate for $\spf(n')$.
Since $\omega(n')\geq |W|$, the power $p^t$ in $n'$ must satisfy the inequality $p^t\frac{P_0(W)}{p}\leq U'$, and so we add to $Q$ the powers $p^t$ for $t$ in the interval $[1, 1+\lfloor \log_p \frac{U'}{P_0(W)}\rfloor]$.
Then we continue with rolling the wheel.

Since $P_0(W)$ grows as the wheel $W$ rolls or grows in length, and sooner or later the wheel stops.
By that time, the set $Q$ captures all feasible prime powers as we prove in the following theorem:

\begin{thm}
Let $a',b',c',U'$ be defined as above.
Suppose $n'\leq U'$ is a solution to $a'\sigma(n')=b'n'+c'$ with $\omega(n')\geq 2$ and $\spf(n')=\p_t>\lpf(m)$ for some index $t$.
Then at a certain point the prime wheel reaches the state 
with $|W|\leq \omega(n')$ and $W_1=\p_t$.
\end{thm}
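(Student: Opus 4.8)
The plan is to track the evolution of the wheel $W$ from its initial state and show that the stopping conditions cannot fire before $W$ reaches the desired configuration. I would set up a monovariant argument based on the two quantities that the wheel controls: its first element $W_1$ (a prime, weakly increasing under rolling, constant under length increments) and its length $|W|$ (weakly increasing under length increments, constant under rolling). Since the initial wheel has $W_1$ equal to the prime after $\lpf(m)$, which is $\le \p_t$ because $\spf(n')=\p_t>\lpf(m)$, and initial length $2\le\omega(n')$, the claim is that the wheel passes through the state $W_1=\p_t$, $|W|\le\omega(n')$ before it stops.

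First I would argue that the ``stop the wheel'' condition ($P_0(W)>U'$) cannot be triggered while $W_1\le\p_t$ and $|W|\le\omega(n')$. Indeed, when $W_1\le\p_t=\spf(n')$ and $|W|\le\omega(n')$, the primes in $W$ are termwise dominated by the $|W|$ smallest prime factors of $n'$ (taken in increasing order starting from $\spf(n')$), so $P_0(W)=\prod_{p\in W}(p)\le n'\le U'$; hence the stopping inequality is false. This is precisely the content of the first bullet of Theorem~\ref{th:bound} applied with $S$ irrelevant: as long as we are in the regime $|W|\le\omega(n')$, $P_0(W)\le U'$. So the wheel cannot halt via that condition before length exceeds $\omega(n')$ or $W_1$ exceeds $\p_t$.

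Next I would show the wheel cannot ``overshoot'' in either coordinate: it cannot have $W_1=\p_{t'}$ with $t'>t$ without having previously passed through $W_1=\p_t$ (rolling increments $k$ by one, so $W_1$ runs through consecutive primes), and it cannot reach length $|W|>\omega(n')$ from a state with $|W|<\omega(n')$ without passing through length exactly $\omega(n')$ (length increments are by one). The subtle point is the interleaving: the wheel performs a sequence of rolls and length increments, and I must ensure that at the moment $W_1$ first equals $\p_t$, the length is still $\le\omega(n')$. For this I would observe that a length increment only happens when $a'P_0(W)/P_1(W)<L(W)$, and I would invoke the second bullet of Theorem~\ref{th:bound} (with $S$ a lower bound on $\sigma(n')$, e.g.\ $S=b'n'+c'$ over $a'$, or simply $S\ge$ the relevant quantity guaranteed by the equation) to argue that whenever $|W|\le\omega(n')$ and $W_1\le\p_t$, we have $a'P_0(W)/P_1(W)\ge a'\sigma(n')/n'\ge L(W)$, so no length increment is forced; that is, while $W_1<\p_t$ the length stays at whatever value it had, and in particular stays $\le\omega(n')$ as long as it started there. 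Combining: the wheel rolls (possibly with occasional but non-forced, hence absent, length increments) until $W_1=\p_t$, at which moment $|W|\le\omega(n')$ still holds, giving the claimed state.

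The main obstacle I anticipate is making the interleaving argument airtight — specifically, verifying that the length-increment condition genuinely \emph{cannot} fire while $W_1\le\p_t$ and $|W|\le\omega(n')$, which requires pinning down the exact lower bound $S$ used for $\sigma(n')$ and checking that $L(W)$ as defined (with its $c'\ge0$ versus $c'<0$ cases) is correctly bounded above by $a'\sigma(n')/n'$ under the hypotheses; the case analysis on the sign of $c'$ and the use of $P_0(W)\le n'$ in the $c'<0$ branch is where a careless step could slip. A secondary (routine) point is confirming that the termwise domination $\p_{k+i-1}\le$ ($i$-th smallest prime factor of $n'$) is valid precisely because $W_1=\p_k\le\p_t=\spf(n')$ and the primes dividing $n'$, listed in increasing order, are each at least the correspondingly-indexed prime $\p_{t+i-1}\ge\p_{k+i-1}$.
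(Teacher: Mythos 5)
Your overall strategy matches the paper's: show by induction on the wheel's evolution that while $W_1\le\p_t$ the stop condition cannot fire and the length cannot exceed $\omega(n')$, so the wheel must eventually roll into the state $W_1=\p_t$, $|W|\le\omega(n')$. Your treatment of the stop condition (termwise domination giving $P_0(W)\le n'\le U'$ whenever $W_1\le\p_t$ and $|W|\le\omega(n')$) is exactly right and is what the paper does.

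However, there is a genuine flaw in your handling of the length-increment condition. You claim that whenever $|W|\le\omega(n')$ and $W_1\le\p_t$ one has $a'P_0(W)/P_1(W)\ge a'\sigma(n')/n'$, and you conclude that \emph{no} length increment ever fires while $W_1<\p_t$, so the wheel stays at its initial length $2$. Both the claim and the conclusion are false. The second bullet of Theorem~\ref{th:bound} gives $\prod_i \p_{k+i-1}/(\p_{k+i-1}-1)\ge S/U$ only for $\ell\ge\omega(n)$ — you are applying it in the opposite regime $\ell\le\omega(n)$, where the product $P_0(W)/P_1(W)$ over too few primes can easily drop below $\sigma(n')/n'$ (e.g.\ a wheel of length $2$ against an $n'$ with five small prime factors). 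Length increments are not pathological events to be excluded: they are the normal mechanism by which the wheel grows from length $2$ up toward $\omega(n')$, and they genuinely occur. What you actually need — and what the paper proves — is only the borderline case: when $|W|=\omega(n')$ (the unique value where both bullets of Theorem~\ref{th:bound} apply) and $W_1\le\p_t$, one has $L(W)\le L(W')\le a'\sigma(n')/n'\le a'P_0(W')/P_1(W')\le a'P_0(W)/P_1(W)$ with $W'=(\p_t,\dots,\p_{t+\omega(n')-1})$, so the increment condition cannot fire \emph{at} length $\omega(n')$ and hence the length never exceeds $\omega(n')$. Your argument needs to be repaired to allow increments below $\omega(n')$ and to block them only at $\omega(n')$; as written, the step "the length stays at whatever value it had" is wrong, and the chain of inequalities comparing $L(W)$ to $a'\sigma(n')/n'$ (including the sign-of-$c'$ case analysis you flag but do not carry out) is left unverified at the one point where it is actually needed.
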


\begin{proof}
The wheel $W$ starts at length $|W|=2$ and $W_1$ being the next prime after $\lpf(m)$. Hence, at the beginning we have $W_1\leq \p_t$ and $|W|\leq \omega(n')$. 
Let $W':=(\p_t,\p_{t+1},\dots,\p_{t+\omega(n')-1})$. Suppose that $W_1\leq \p_t$. We have:
\begin{itemize}
    \item if $|W|\leq \omega(n')$, then
    $$
    P_0(W) \leq P_0(W') \leq n' \leq U';    
    $$
    \item if $|W| = \omega(n')$, then again $P_0(W) \leq P_0(W')$, which together with Theorem~\ref{th:bound} further implies
    $$
    L(W) \leq L(W') \leq a'\frac{\sigma(n')}{n'} \leq a'\frac{P_0(W')}{P_1(W')} \leq a'\frac{P_0(W)}{P_1(W)}.
    $$
\end{itemize}
By induction on $W_1$, it now follows that while $W_1\leq \p_t$, the wheel $W$ does not stop (since $P_0(W) \leq U'$) and cannot grow in length above $\omega(n')$ (since $L(W) \leq a'\frac{P_0(W)}{P_1(W)}$).
That is, eventually $W$ reaches the state with $|W|\leq \omega(n')$ and $W_1=\p_t$.
\end{proof}

For the sake of simplicity, we did not include the lower bound for $\Omega(n')$ in
the wheel description and analysis above. In fact, knowing that $\Omega(n') \geq \ell_\Omega$ for some $\ell_\Omega\geq 3$ provides us with a better lower bound for $n'$, which is $n'\geq W_1^{\ell_\Omega-|W|} P_0(W)$ instead of just $P_0(W)$, and thus $P_0(W)$ should be replaced with $W_1^{\ell_\Omega-|W|} P_0(W)$ in the wheel exit condition and the definition of $L(W)$. 

\subsection{Case of odd $\sigma$}\label{sec:odd_sigma}

We recognize the case when both $a'$ and $b'+c'$ are odd. In this case, for any \emph{odd} solution $n'$, we have $$\sigma(n')\equiv a'\sigma(n') = b'n' + c'\equiv b'+c'\equiv 1\pmod2,$$
implying that $n'$ is an odd square. We take this observation into an account by adjusting the pruning and construction of the set $Q$ described above. In particular, when $p:=\spf(n')>2$ and hence $n'$ is an odd square, the wheel stop condition $W_1^{\ell_\Omega-|W|} P_0(W) > U'$ changes to $W_1^{\ell_\Omega-2|W|} P_0(W)^2 > U'$, and we restrict our attention only to even exponents $t$ while adding powers $p^t$ to $Q$. Additionally, from $a'\sigma(p^t)\sigma(n'/p^t)=b'n'+c$, it follows that for any prime $q\mid \sigma(p^t)$, $-b'c' \equiv (b')^2n' \pmod{q}$, i.e., $-b'c'$ is a square residue modulo $q$. We test this condition by comparing Legendre symbol $\left(\frac{-b'c'}{q}\right)$ to $-1$, and discard $t$ if the equality holds for any such $q$.

Similarly, sometimes we can recognize the oddness of $\sigma(n')$ irrespectively of the parity of $n'$, e.g., when $a'$ and $c'$ are odd while $b'$ is even. In this case, $n'$ can be a square or twice a square.
Correspondingly, we extend the test described above to $p=2$ by computing Legendre symbol $\left(\frac{-2^tb'c'}{q}\right)=\left(\frac{-2^{t\bmod 2}b'c'}{q}\right)$. In particular, this test automatically eliminates the possibility of even solutions for the quasiperfect numbers satisfying $\sigma(n)=2n+1$ (see Section~\ref{sec:small_abundance}) since for any exponent $t\geq 1$, $\sigma(2^t)=2^{t+1}-1$ has a prime factor $q$ congruent to 3 modulo 4, giving Legendre symbol $\left(\frac{-2^tb'c'}{q}\right) = \left(\frac{-2^{t+1}}{q}\right) = \left(\frac{-1}{q}\right) = -1$.

We also recognize the squareness of $n'$ when we additionally know the value of $\tau(n')$ (see Section~\ref{sec:add_constraints}) and this value is odd.

\subsection{Case of $\gcd(a',c')>1$}

From $\gcd(a',b',c')=1$, it follows that $g:=\gcd(a',c')$ divides any solution $n'$. Suppose that $g>1$.
If $\gcd(g,m)>1$, then there are no solutions as $n'$ is coprime to $m$. However, if $\gcd(g,m)=1$, the prime factors of $g$ give valid prime factors of $n'$. In this case, instead of rolling the wheel in search for $\spf(n')$, we pick the largest prime power $p^e$ from the prime factorization of $g$ and define 
$Q = \{ p^t\ :\ t = e, e+1, \dots, e+\lfloor \log_p\frac{U'}{g}\rfloor\}$.
Jumping from $m$ to a node $m':=mq$ for $q\in Q$ facilitates a more narrowed search for $n'$.

Since solutions of the form $n=m'n''$ do not have to satisfy the restriction $\spf(n'')>\lpf(m')$ anymore,
to properly incorporate such jumps into the search, we introduce and maintain a lower bound $l_p$ for $\spf(n')$ independent of $\lpf(m)$ (e.g., $l_p$ does not change when we jump from $m$ to $m'$). 
Also, to guarantee that $\gcd(m,n')=1$, we make the prime wheel roll over the set primes exluding the prime factors of $m$.

\section{{\sc SageMath} implementation}\label{sec:implement}

\subsection{RES framework}

The described traversal of $T_U$ fits nicely the framework of \emph{recursively enumerated set} (RES) in {\sc SageMath} computer algebra system~\cite{Sage}. It allows efficient traversal the nodes of a forest (tree $T_U$ in our case) by specifying seeds (i.e., the root of $T_U$) and defining a function {\tt succ($t$)} that computes the set of successors of a given node $t$. To simplify computations, we define $t$ as a tuple 
$(a',b',c', m, l_p, \mathrm{aux})$, where the first five elements have the same meaning as in the previous section, and $\mathrm{aux}$ is a dictionary with additional constraints (see Section~\ref{sec:add_constraints} below). So, the tuple $t$ may be viewed as the \emph{configuration} of node $m$ in $T_U$.

\subsection{Configurations reduction}\label{sec:reduceabc}

In order to better handle configurations, we define a local function {\tt reduce\_abc($t$)}, which reduces the given configuration $t$ (e.g., by canceling the common factor of $a',b',c'$) and returns the resulting reduced configuration. It recognizes some cases when the given $t$ has no solutions and returns {\tt None}, indicating that traversal of the subtree rooted at $t$ should be avoided. For example, $\gcd(a',b',c')=1$ but $\gcd(a',c')$ having a prime factor (which has to divide $n'$) below $l_p$ is such a case. 

Another special case recognized by {\tt reduce\_abc} is $a'=b'=c'$, where any prime $p$ would be a solution. However, in view of the given $m$ and $l_p$, primes $p$ in the solution must be restricted to $p\geq l_p$ and $p\nmid m$. Function {\tt reduce\_abc($t$)} prints a message describing the corresponding infinite series of solutions, and avoids solving this equation by returning {\tt None} as above. We show an example of an equation with an infinite series of solutions in Section~\ref{sec:small_abundance} below.

As certain equations of the form~\eqref{eq:main} have already received significant effort in computing their solutions, our implementation supports optional referencing to those "core" equations (parameter {\tt refs}) and the corresponding OEIS sequences. When {\tt refs=True}, once a configuration $t$ is identified as corresponding to a core equation, a message with a reference to the corresponding OEIS sequence is printed and no processing of $t$ takes place. In particular, equations $(a',b')=(1,2)$ and small even $c'$ (discussed in Section~\ref{sec:small_abundance}) can be referenced this way as their solutions below $10^{20}$ can be queried from the OEIS.

\subsection{{\sc MapReduce} parallelization}

The primary benefit of the RES framework is a readily-available parallelization via the {\sc MapReduce} mechanism~\cite{Hivert2017} present in {\sc SageMath}. Besides the parallelized traversal, it supports parallel processing of each visited node $t$ via a user-defined function {\tt proc($t$)}, which computes the result (e.g., set of solutions) for node $t$, and those results then can be combined over all visited nodes. 
In our case, while the prime wheel (that computes successors) is implemented inside {\tt succ($t$)} function, computing the shortcuts (that produces actual solutions) are conveniently implemented inside {\tt proc($t$)}.

\subsection{Additional constraints}\label{sec:add_constraints}

It is possible to further narrow the traversal by enforcing additional constraints.
Our implementation supports the following constraints via optional parameters: 
\begin{itemize}
    \item squarefreeness of $n$ (parameter {\tt squarefree});
\item evenness of $n$ (parameter {\tt even\_only});
\item coprimality to a given integer (parameter {\tt coprime\_to});
\item bounds for $\omega(n)$ and $\Omega(n)$ (parameters {\tt omega} and {\tt bigomega}, respectively);
\item a prescribed value for $\tau(n)$ (parameter {\tt numdiv}). 
\end{itemize}
Nontrivial constraints, whether they are derived from the given parameters or obtained while rolling the prime wheel in {\tt succ()} function, are passed (in {\tt aux} dictionary) from a parent node to its children to propagate a narrowed search. Also, such constraints can save time while computing shortcuts in {\tt proc()} function: for example, a bound like $\omega(n')\geq 2$ implies that the case $n'=p^k$ is impossible and can be skipped, and similarly a bound like $\Omega(n')\geq 3$ implies that the case $n=pq$ is impossible.

\subsection{Solutions above $U$}\label{sec:aboveU}

As we already noted, the shortcuts described in Section~\ref{sec:shortcuts} can potentially produce some solutions above $U$. In our implementation, we have control over whether to ignore or report such large solutions (parameter {\tt strict}). In our computational experiments, some of which are described in the next section, large solutions---whether previously known or newly discovered---happen to inspire us to increase the search bound and thus eventually place those solutions in order. 
Unfortunately, some of the discovered solutions, such as the greater of two $2772$-hyperperfect numbers reported in Section~\ref{sec:hyperperfect}, are too large and remain inaccessible as a search bound.

\subsection{Availability}

Our implementation is available from the following GitHub repository:
\begin{center}
\url{https://github.com/maxale/multiplicative_functions}    
\end{center}

Our method is accessible via function {\tt res\_solve\_sigma\_abc()} in the code file {\tt sigma\_linear\_eq.sage}. It expects from a caller the required arguments $a$, $b$, $c$, and $U$, and also supports optional parameters, some of which are described above. A full list of supported parameters and their format can be seen directly in the code.

\section{Applications}\label{sec:applications}

In this section, we present some practical results obtained with our method for various equations of interest.

\subsection{Numbers with a small abundance}\label{sec:small_abundance}

The \emph{abundance} of a number $n$ is defined as $\sigma(n)-2n$. The perfect numbers have abundance $0$, so the abundance of $n$ can be viewed as the "distance" from $n$ to being a perfect number. 

The next two famous cases are the numbers with abundance $1$ called \emph{quasiperfect numbers}, and the numbers with abundance $-1$ called \emph{almost-perfect numbers}. Existence of quasiperfect numbers is an open question. It is known that quasiperfect numbers must be odd squares greater than $10^{35}$~\cite{Hagis1982}. 
With our method, we lift this bound to $10^{45}$, which was established in about 440 core-hours (specifically, about 11 hours on a 40-core machine).\footnote{We define \emph{core-hours} as the wall-clock time in hours taken by the computation times the number of used cores. Most experiments were run on Intel Xeon 2.40GHz or AMD EPYC 2.2GHz CPUs.} 
As we explained in Section~\ref{sec:odd_sigma}, the squareness and oddness of the possible solutions is automatically detected and taken into account by our method.

The only known almost-perfect numbers are the powers of $2$. The existing literature on almost-perfect numbers does not seem to give an explicit lower bound on almost-perfect non-powers of $2$, but focuses on the possible structure of such numbers (e.g., see \cite{Kishore1981}).
With our method, we establish that no other almost-perfect numbers exist below $10^{33}$, which took about $6540$ core-hours. For the odd almost-perfect number other than $1$, we establish that none exist below $10^{47}$, which took about $1272$ core-hours.

In general, numbers with an odd abundance are much sparser than those of even abundance, since an odd abundance of $n$ implies the oddness of $\sigma(n)$, and thus $n$ must be a square or twice a square.
The Online Encyclopedia of Integer Sequences~\cite{OEIS} contains sequences for each even abundance in the interval $[-32,32]$ as well as for abundances in $\{-42, -54, \pm 64, \pm 90, 128\}$.
With our method, we have routinely completed these sequences with all terms below $10^{20}$. For some of them we actually reached a larger bound, typically chosen to match some term discovered by the shortcuts (e.g., a term produced by Theorem~\ref{th:abundance}).
In Table~\ref{tab:fixed_abundance},
we list some of largest bounds we achieved and the corresponding running time in core-hours.

\begin{table}[!t]    
\begin{center}
\begin{tabular}{c|c|c|c}
Abundance & OEIS & Search bound & Core-hours \\
\hline
-2  & {\tt A191363} & $10^{24}$ & 42 \\
2 & {\tt A088831} & $10^{24}$ & 46 \\
6  & {\tt A087167} & $1.5\cdot 10^{23}$ & 1720 \\
10  & {\tt A223609} & $9.6\cdot 10^{24}$ & 340 \\
14  & {\tt A141546} & $10^{24}$ & 40 \\
18  & {\tt A223610} & $1.5\cdot 10^{23}$ & 1440 \\
-22 & {\tt A223606} & $1.5\cdot 10^{26}$ & 436 \\
-24 & {\tt A385255} & $1.5\cdot 10^{23}$ & 246 \\
90  & {\tt A389703} & $1.5\cdot 10^{23}$ & 1805 
\end{tabular}
\caption{Selected fixed-abundance sequences in the OEIS, along with the achieved search bounds and the approximate running time taken by the search.}
\label{tab:fixed_abundance}
\end{center}
\end{table}

We remark that the numbers of abundance $12$ contain an infinite subsequence $(6\p_k)_{k\geq 3}$ and thus the corresponding OEIS sequence {\tt A141545} is mostly composed of small terms from this subsequence. Our method correctly identifies this infinite subsequence (by printing a message about it) and focuses on searching \emph{sporadic} solutions outside it. Those sporadic solutions can seen as a subsequence of the OEIS sequence {\tt A234238}, which lists sporadic solutions to a more general congruence $\sigma(n)\equiv 6\pmod{n}$ and which we solved below $10^{24}$.

\subsection{Hyperperfect numbers}\label{sec:hyperperfect}

Hyperperfect numbers represent another generalization of perfect numbers~\cite[Section B2]{Guy2004}.
A positive integer $n$ is called $k$-hyperperfect for some integer $k$ if $n = 1 + k (\sigma(n)-n-1)$, where $\sigma(n)-n-1$ can be seen as the sum of divisors of $n$ other than $1$ and $n$. The $1$-hyperperfect numbers are exactly the perfect ones. 
McCranie~\cite{McCranie2000} tabulated hyperperfect numbers below $10^{11}$ and identified a few values of $k$ of particular interest. Besides the perfect numbers, the OEIS contains sequences of $k$-hyperperfect numbers listed in Table~\ref{tab:hyperperfectOEIS}.

\begin{table}[!t]   
\begin{center}
\begin{tabular}{c||c|c|c|c|c|c|c}
k & 2 & 4 & 6 & 12 & 18 & 2772 & 31752 \\
\hline
OEIS & {\tt A007593} & {\tt A220290} & {\tt A028499} & {\tt A028500} & {\tt A028501} & {\tt A028502} & {\tt A034916}
\end{tabular}    
\caption{The sequences of $k$-hyperperfect numbers (other than perfect ones) that are present in the OEIS.}
\label{tab:hyperperfectOEIS}
\end{center}
\end{table}

Noting that the defining equation for $k$-hyperperfect numbers has the form~\eqref{eq:main} with $(a,b,c)=(k, k+1, k-1)$, we apply our method for determining all terms in the cited sequences below bounds of at least $10^{20}$. Besides pushing the search bounds and putting known terms in order, we discovered some previously unknown 
hyperperfect numbers, such as the following two $2772$-hyperperfect numbers composed of 3 and 4 primes, respectively:
$$
47268697363953913 = 2791\cdot 411409\cdot 41166127
$$
and
$$
186690534609915040044368953 = 5237\cdot 6173\cdot 128669\cdot 44881723181837.
$$
While the former number is below our search bound and is proved to be the fifth $2772$-hyperperfect number in order, the latter one currently remains out of reach and thus its order number is unknown.

While $2$-hyperperfect numbers satisfy the equation $2\sigma(n)=3n+1$, the OEIS sequence {\tt A063906} lists solutions to a similar equation $2\sigma(n)=3n+3$, which can be also written as $\sigma(n)=\frac{3}2(n+1)$ to somewhat resemble perfect numbers. We determined all solutions to this equation below $3.7\cdot 10^{23}$, which took us 6336 core-hours, as well as discovered some previously unknown terms above that bound. 

\subsection{$f$-perfect numbers}

For a given arithmetic function $f$, $f$-perfect numbers are defined~\cite{Pe2002} as integers $n$ satisfying $2f(n) = \sum_{d\mid n} f(d)$. For the identity function $f$, they are exactly the perfect numbers, and thus $f$-perfect numbers represent yet another generalization of the perfect numbers.
The OEIS contains a few sequences listing $f$-perfect numbers, including $f(x)=x+1$ (sequence {\tt A066229}) and $f(x)=x-1$ (sequence {\tt A066230}).

Note that when $f$ is a linear function, say $f(n)=un+v$ with integer coefficients $u,v$, then the defining equation of $f$-perfect numbers becomes $2(un+v)=u\sigma(n) + v\tau(n)$. For a fixed value of $\tau(n)=d$ it takes the form~\eqref{eq:main} with $(a,b,c)=(u,2u,v(2-d))$, for which we can run our method with the additional constraint $\tau(n)=d$ (see Section~\ref{sec:add_constraints}). We identify the feasible values of $\tau(n)$ as follows.

The bound $n\leq U$ implies an upper bound for $\tau(n)$. For $U<10^{480}$, an accurate bound can be obtained from data present in the OEIS sequence {\tt A002182} of \emph{highly composite numbers}, which are the numbers $k$ such that $\tau(k)>\tau(\ell)$ for all $\ell<k$. Namely, if $k$ is the largest such number with $D:=\tau(k)\leq U$, then for any $n\leq U$, we have $\tau(n)\leq D$. We can further quickly identify feasible values of $d$ in the interval $[1,D]$ by checking if the smallest number $m$ with $\tau(m)=d$ (OEIS sequence {\tt A005179}) does not exceed $U$. 

Following this route, we determined all $(x+1)$-perfect numbers below
$1.5\cdot 10^{23}$, including the following newly discovered term with a rich prime factorization:
$$
20055918935605248255 = 3\cdot 5\cdot 7^3\cdot 17\cdot 101\cdot 719\cdot 991\cdot 3186283.
$$
Similarly, we determined all $(x-1)$-perfect numbers below $5.9\cdot 10^{20}$.

\section{Concluding remarks}\label{sec:remarks}

It is hard to come up with an accurate complexity analysis for the proposed algorithm, but our computational experiments show that it is very efficient in practice and can reach much larger search bounds than the previously reported in the literature. They also show (e.g., in Table~\ref{tab:fixed_abundance}) that its running time is sensitive to the given coefficients as it may vary significantly for the coefficients the same magnitude and the same upper bound $U$.

Empirically, within the explored search bounds, the running time as a function of $U$ for many equations seems to grow as $\Theta(r^{\log_{10} U})$ with a constant $r$ (depending on the equation coefficients) in the interval $[2,4]$, although there exist outliers with smaller and larger values of $r$. Also, our computations tend to scale up well with the number of cores (e.g., using 80 cores reduces the running time by a factor close to 2 as compared to 40 cores). Unfortunately, the performance of the current {\sc MapReduce} functionality in {\sc SageMath} may drastically degrade as the number of cores gets close or exceeds a hundred,\footnote{See {\sc SageMath}'s issue \#41115: \url{https://github.com/sagemath/sage/issues/41115}} and to be on a safe side in our computational experiments we used at most 80 cores.

We took a great care about crafting our algorithm at the high level (minimizing the number of nodes of $T_U$ to visit) and fitting it into the RES/{\sc MapReduce} framework, but we did not do much about optimization at the lower level. Since {\sc SageMath} is {\sc Python}-based, it does not provide the best performance out of the box. We expect that \emph{cythonization} of our implementation or re-implementing it in a parallelization-aware mid-level programming language (such as {\sc Cilk} extension of {\sc C++}) can bring some- or even many-fold speedup. This is something we plan to explore in future.

Another possibility for scaling up our method is using parallelization not only within the cores of a single computer, but also across multiple computers. We believe it is well amenable to distributing across multiple nodes of a computational cluster as well as across a variety of computers in a crowd-computing project, although we did not pursue that in practice.

An obvious drawback of our method is its inability to extend the search from an already achieved search bound to a larger one. In order to increase the search bound, the whole computation should be started from scratch.

Recently we used Theorem~\ref{eq:main} and a similar computational approach within the collaborative effort~\cite{Tao2025} proving that the largest $n$ such that $L_n:=\mathrm{lcm}(1,2,\dots,n)$ is highly abundant is $n = 169$. In practice, our approach is able to determine if $L_n$ is highly abundant for $n$ up to a few hundred (surely including all $n\leq 169$).

The tree structure on the positive integers (described in Section~\ref{sec:outline}) is somewhat similar to the one used by Fang~\cite{Fang2022}, although they use multiplication by single primes rather than prime powers while going down along the tree. Both our and their search algorithms can be seen as instances of the reverse search~\cite{Avis1996}. While their target is not the equation~\eqref{eq:main} and thus direct comparison of the two approaches is not possible, they claim that their algorithm and pruning strategy \emph{"can be adapted to search for ... odd almost-perfect numbers"}. However, since their approach was designed for a different problem, it understandably misses some techniques (e.g., what we refer to as shortcuts) that we found essential to the efficient search for odd almost-perfect numbers.

With a suitable adjustment of the shortcut and pruning techniques, our method can be used for linear equations with other multiplicative functions. In particular, we already have an efficient solver for linear equations with Euler's totient function; the manuscript describing it is currently in preparation.

\clearpage\newpage
\bibliographystyle{plainurl}
\bibliography{refs}

\end{document}